\newtheorem{theorem}{Theorem}
\newtheorem{lemma}[theorem]{Lemma}
\newtheorem{proposition}[theorem]{Proposition}
\newtheorem*{assertion}{Assertion}
\DeclareMathOperator{\lpt}{lpt}
\DeclareMathOperator{\lct}{lct}
\renewcommand{\le}{\leqslant}
\renewcommand{\ge}{\geqslant}
\renewcommand{\leq}{\leqslant}
\renewcommand{\geq}{\geqslant}
\newcommand{\abs}[1]{\left\lvert#1\right\rvert}
\newcommand{\sst}[2]{\left\{#1\,:\,#2\right\}}
\newcommand\ifEven[3]{
\begingroup
\pgfmathsetmacro{\var}{#1}
\pgfmathparse{ifthenelse(mod(\var,2)==0,1,0)}
\ifdim\pgfmathresult pt= 1 pt
   #2
  \else
   #3
\fi
\endgroup
}
\begin{document}
\title{Transversals of Longest Paths and Cycles}
\author{Dieter Rautenbach$^1$ and Jean-S\'{e}bastien Sereni$^2$}
\date{}
\maketitle
\begin{center}
$^1$
Institut f\"{u}r Optimierung und Operations Research\\
Universit\"{a}t Ulm, Ulm, Germany, \texttt{dieter.rautenbach@uni-ulm.de}\\[3mm]
$^2$
Centre national de la recherche scientifique\\
LORIA, Vand{\oe}uvre-l\`{e}s-Nancy, France, \texttt{sereni@kam.mff.cuni.cz}
\end{center}

\begin{abstract}
Let $G$ be a graph of order $n$.
Let $\lpt(G)$ be the minimum cardinality of a set $X$ of vertices of $G$
such that $X$ intersects every longest path of $G$
and define $\lct(G)$ analogously for cycles instead of paths.
We prove that
\begin{itemize}
\item $\lpt(G)\leq \left\lceil\frac{n}{4}-\frac{n^{2/3}}{90}\right\rceil$, if $G$ is connected,
\item $\lct(G)\leq \left\lceil\frac{n}{3}-\frac{n^{2/3}}{36}\right\rceil$, if $G$ is $2$-connected, and
\item $\lpt(G)\leq 3$, if $G$ is a connected circular arc graph.
\end{itemize}
Our bound on $\lct(G)$ improves an earlier result of Thomassen
and
our bound for circular arc graphs relates
to an earlier statement of Balister \emph{et al.}~the argument of which contains a gap.
Furthermore, we prove upper bounds on $\lpt(G)$
for planar graphs and graphs of bounded tree-width.
\end{abstract}

{\small
\medskip
\noindent \textbf{Keywords:}
Longest path, longest cycle, transversal.

\medskip
\noindent \textbf{MSC2010:}
05C38, %Paths and cycles
05C70 %Factorization, matching, partitioning, covering and packing
}

\pagebreak

\section{Introduction}
It is well known that
every two longest paths in a connected graph
as well as
every two longest cycles in a $2$-connected graph intersect.
While these observations are easy exercises,
it is an open problem,
originating from a question posed by Gallai~\cite{ga},
to determine the largest value of $k$
such that for every connected graph and every $k$ longest paths in that graph,
there is a vertex that belongs to all of these $k$ paths.
The above remark along with examples constructed by Skupie\'{n}~\cite{sk}
ensure that $2\leq k\leq 6$.

We consider only simple, finite, and undirected graphs and use standard terminology.
For a graph $G$, we define $\mathcal{P}(G)$ to be the collection of all longest paths of $G$
and a \emph{longest path transversal} of $G$ to be a set of vertices that intersects every longest path of $G$.
Let $\lpt(G)$ be the minimum cardinality of a longest path transversal of $G$.
We define $\mathcal{C}(G)$, a \emph{longest cycle transversal}, and the parameter $\lct(G)$
analogously for cycles instead of paths.

The intersections of longest paths and cycles have been studied in detail
and Zamfirescu~\cite{za2} gave a short survey.
In the present paper we prove upper bounds on
$\lpt(G)$
and
$\lct(G)$.
Our bound on $\lct(G)$ for a $2$-connected graph $G$ improves an earlier
result of Thomassen~\cite{th}.
Balister \emph{et al.}~\cite{bagylesh} showed that
for every connected interval graph,
there is a vertex that belongs to every longest path.
Furthermore, their work
\cite{bagylesh} contains the statement that
for every connected circular arc graph,
there is a vertex that belongs to every longest path.
Unfortunately, we believe that the argument they provide has a gap.
We shall explain the approach of Balister \emph{et al.}, the problem with their argument,
and give a proof of a weaker result, specifically that every connected
circular arc graph contains a longest path transversal of order at most $3$.

\section{Results}
We start by proving a lemma that allows us to exploit the structure of
some particular matchings to find long paths and cycles.
\begin{lemma}\label{lemma1}
If $G=(P\cup Q)+M$
where
$P:u_1\ldots u_\tau$
and
$Q:v_1\ldots v_\tau$
are paths and $M$ is a matching of edges between $V(P)$ and $V(Q)$
that has a partition $M=M_1\cup \ldots\cup M_q$ such that
\begin{enumerate}[(a)]
\item $\abs{M_i}$ is either $1$ or even for $i\in [q]$ and
\item if $u_{i_1}v_{i_2}\in M_i$ and $u_{j_1}v_{j_2}\in M_j$ for $i,j\in [q]$,
then
\[
(j_1-i_1)(j_2-i_2)
\begin{cases}
< 0, & \text{if $i=j$ and}\\
> 0, & \text{if $i\not=j$,}
\end{cases}
\]
that is,
the edges in one of the sets $M_i$ are pairwise ``\emph{crossing}''
and
the edges in distinct sets $M_i$ are pairwise ``\emph{parallel}'',
\end{enumerate}
then $G$ contains a path between a vertex in $\{ u_1,v_1\}$ and a vertex in $\{ u_\tau,v_\tau\}$
of order at least $\tau+\abs{M}$.
\end{lemma}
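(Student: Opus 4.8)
\noindent\emph{Proof proposal.} The plan is to use the combinatorial structure forced by condition~(b) to write down, explicitly, two paths that are ``reflections'' of one another and whose numbers of vertices add up to exactly $2(\tau+\abs{M})$; the longer of the two is then a path with the required order. To set things up, orient $P$ and $Q$ by increasing index. Condition~(b) says that the edges of a single $M_i$ pairwise cross while edges of distinct bundles are pairwise parallel; feeding this into the sign conditions, one checks that no $u$-index of one bundle can lie strictly between two $u$-indices of another (and likewise on $Q$). Hence the bundles may be relabelled $M_1,\dots,M_q$ so that, for $r<s$, every $u$-index occurring in $M_r$ is smaller than every one occurring in $M_s$, and the same holds on $Q$; in particular the supports of distinct bundles on $P$ (and on $Q$) are pairwise separated and every index lying strictly between two of these supports is unmatched on both sides. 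Within $M_r$ list the edges $e^r_1,\dots,e^r_{m_r}$ by increasing $u$-index, so that by~(b) their $v$-indices strictly decrease, and recall from~(a) that $m_r$ is even or equal to $1$.

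Now the construction. I build a path $W$ that sweeps from index $1$ to index $\tau$, at each moment travelling along one ``active'' side among $P,Q$: outside the bundles it follows the active side; upon reaching a bundle $M_r$ with $m_r$ even it performs a zigzag that uses all $m_r$ edges of $M_r$ (crossing them one after another while running along $P$ and $Q$ in turn) and returns to the active side; upon reaching a bundle $M_r$ with $m_r=1$ it crosses the unique edge and \emph{switches} the active side permanently; finally it runs to $u_\tau$ or $v_\tau$. Condition~(a) is exactly what lets the even-bundle zigzag close up, while the separated-support property guarantees that distinct zigzags and switches reuse no vertices, so $W$ is a simple path; and it plainly starts in $\{u_1,v_1\}$ and ends in $\{u_\tau,v_\tau\}$. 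Let $W'$ be obtained by starting on the other side and mirroring every decision, so that at each bundle $W'$ dips into, or switches to, the side opposite to the one $W$ uses; $W'$ is a simple path with the same pair of end-sets.

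It remains to prove $\abs{V(W)}+\abs{V(W')}=2(\tau+\abs{M})$. Writing $\abs{V(W)}=\tau+\abs{M}+\Delta$, the unmatched vertices and the inter-bundle stretches contribute nothing to $\Delta$, while each bundle $M_r$ contributes a signed index-difference $\Delta_r$: for a singleton $\{u_av_b\}$ it equals $\pm(b-a)$, and for an even bundle it is a telescoping alternating sum of the increasing $u$-indices of $M_r$ against the decreasing $v$-indices. Since $W'$ replaces every $\Delta_r$ by $-\Delta_r$, we get $\abs{V(W')}=\tau+\abs{M}-\Delta$, whence the two orders sum to $2(\tau+\abs{M})$ and $\max\{\abs{V(W)},\abs{V(W')}\}\ge\tau+\abs{M}$. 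Equivalently one may run an induction on $q$, maintaining a pair of reflected paths in the graph induced by the first $r$ bundles and extending both across $M_{r+1}$, checking that the total number of vertices then grows by $2m_{r+1}$ plus twice the number of newly swept indices.

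The only genuine difficulty I foresee is this last piece of bookkeeping: one must pin the even-bundle zigzag down precisely enough to see that it is a simple path using all $m_r$ edges and returning to its starting side, and then verify that its contribution to $\abs{V(W)}+\abs{V(W')}$ is exactly $2m_r$ plus twice the length of the stretch it replaces, with all index-differences cancelling between $W$ and $W'$. To limit the risk of error I would first settle the single-bundle case — there the two reflections can be written out directly and their orders visibly sum to $2(\tau+m_1)$ — and then invoke the separated-support property to treat the bundles one at a time.
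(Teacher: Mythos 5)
Your proposal is correct and is essentially the paper's own argument: the paper likewise builds two complementary paths (via ``odd'' and ``even'' segments of $P$ and $Q$ between consecutive matched vertices, each path using all of $M$) and takes the longer one. The paper's bookkeeping is slicker than your signed index-differences --- it just notes that $E(P')\cap E(Q')=M$ and $E(P')\cup E(Q')=E(P)\cup E(Q)\cup M$, so the orders sum to $2(\tau+\abs{M})$ --- but the construction and the averaging step are the same as yours.
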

\begin{proof}
If $i_0=1$, $i_{\abs{M}+1}=\tau$,
and
$u_{i_1},\ldots,u_{i_{\abs{M}}}$ with $1\leq i_1<\ldots<i_{\abs{M}}\leq \tau$ are the vertices of $P$
that are incident with edges in $M$,
then a subpath of $P$ of the form
$u_{i_j}\ldots u_{i_{j+1}}$ with odd/even $j$ is called an \emph{odd/even segment of $P$},
respectively.
Odd/even segments of $Q$ are defined analogously.

The odd segments of $P$, $M$, and the even segments of $Q$ define a path $P'$.
Similarly,
the even segments of $P$, $M$, and the odd segments of $Q$ define a path $Q'$.
See Figure~\ref{fig1} for an illustration.
Since $E(P')\cap E(Q')=M$ and $E(P')\cup E(Q')=E(P)\cup E(Q)\cup M$,
the longer of the two paths satisfies the desired properties.
\end{proof}

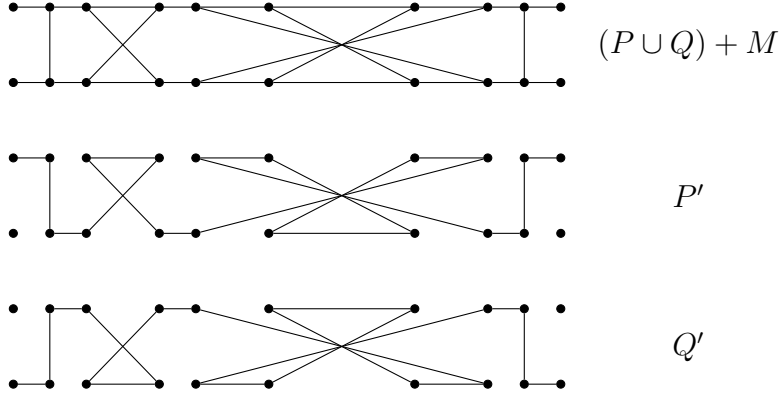
\begin{figure}[H]
\begin{center}
 \begin{tikzpicture}[x=.48cm,y=.5cm,vertex/.style={circle, draw=black, fill=black,
      inner sep=0mm, minimum size=3pt}]
\def\xList{1,2,4,5,7,11,13,14,15}
\foreach \y in {3,5} {
   \draw (0,\y) node[vertex] (0-\y) {};
      \foreach \x [count=\i] in \xList {%
         \pgfmathtruncatemacro\prevI{\i-1}
         \draw (\prevI-\y)--(\x,\y) node[vertex] (\i-\y) {};
  }
}
\foreach \y in {1,-1} {
   \draw (0,\y) node[vertex] (0-\y) {};
      \foreach \x [count=\i] in \xList {%
         \pgfmathtruncatemacro\prevI{\i-1}
         \draw (\x,\y) node[vertex] (\i-\y) {};
         \ifEven{\i}{\ifnum\y=-1{\draw (\prevI-\y)--(\i-\y);}\fi}{\ifnum\y=1{\draw (\prevI-\y)--(\i-\y);}\fi}
}
}
\foreach \y in {-3,-5} {
   \draw (0,\y) node[vertex] (0-\y) {};
      \foreach \x [count=\i] in \xList {%
         \pgfmathtruncatemacro\prevI{\i-1}
         \draw (\x,\y) node[vertex] (\i-\y) {};
         \ifEven{\i}{\ifnum\y=-3{\draw (\prevI-\y)--(\i-\y);}\fi}{\ifnum\y=-5{\draw (\prevI-\y)--(\i-\y);}\fi}
}
}
         \foreach \y in {-5,-1,3} {
         \pgfmathtruncatemacro\upp{\y+2}
         \draw (1-\y)--(1-\upp);
         \draw (2-\upp)--(3-\y);
         \draw (2-\y)--(3-\upp);
         \draw (4-\upp)--(7-\y);
         \draw (4-\y)--(7-\upp);
         \draw (5-\upp)--(6-\y);
         \draw (5-\y)--(6-\upp);
         \draw (8-\upp)--(8-\y);
         }
\draw (18.5,4) node {$(P\cup Q)+M$};
\draw (18.5,0) node {$P'$};
\draw (18.5,-4) node {$Q'$};
\end{tikzpicture}
\end{center}
\caption{The two paths $P'$ and $Q'$ for a set $M$ with
$\abs{M_1}=\abs{M_4}=1$, $\abs{M_2}=2$, and $\abs{M_3}=4$.}\label{fig1}
\end{figure}

\noindent We proceed to our first main result.
Note that in the proof of Theorem~\ref{theorem1},
as well as of Theorem~\ref{theorem2} below,
we did not try to minimize the factor of $n^{\frac{2}{3}}$.
The point of these two results is that
$\lpt(G)$ is strictly less than $n/4$
and
$\lct(G)$ is strictly less than $n/3$,
respectively.

\begin{theorem}\label{theorem1}
If $G$ is a connected graph of order $n$,
then $\lpt(G)\leq \left\lceil\frac{n}{4}-\frac{n^{2/3}}{90}\right\rceil$.
\end{theorem}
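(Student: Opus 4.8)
The plan is to fix a longest path $P = v_1 v_2 \cdots v_\ell$ of $G$ and to distinguish cases according to how large $\ell$ is. Put $t := \left\lceil \tfrac{n}{4} - \tfrac{n^{2/3}}{90}\right\rceil$ and $W := V(G)\setminus V(P)$, so $|W| = n-\ell$. Recall the classical fact that in a connected graph every longest path meets $P$; consequently $V(P)$ is itself a longest path transversal, and if $\ell \le t$ we are already done. At the opposite extreme, if $\ell$ is very large --- concretely $\ell > n - t$ --- then for any set $R\subseteq V(P)$ with $|R| = \ell - t$, a longest path $Q$ avoiding $V(P)\setminus R$ would satisfy $V(Q)\subseteq R\cup W$, a set of size $(\ell-t)+(n-\ell)=n-t<\ell$, which is impossible for a path of order $\ell$; hence deleting \emph{any} $\ell-t$ vertices of $P$ leaves a transversal, and we are done with room to spare. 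So the heart of the matter is the intermediate range $t<\ell\le n-t$, where we must be careful about \emph{which} $t$ vertices of $P$ to keep.

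In the intermediate range I would keep in $X$ roughly one vertex out of every few consecutive vertices along $P$, the spacing chosen so that $|X|\le t$, but I would additionally delete from $X$ all of its vertices lying in some $\Theta(n^{2/3})$ of the resulting gaps, i.e.\ allow about $n^{2/3}$ of the inter-$X$ stretches of $P$ to be long; this is the source of the savings $-n^{2/3}/90$. Suppose for contradiction that some longest path $Q$ avoids $X$. Then every maximal subpath of $P$ that lies on $Q$ is contained in a single gap of $X$, and since $Q$ has the same order $\ell$ as $P$ and uses at most $|W|=n-\ell$ vertices off $P$, it must traverse many of these gaps and use many vertices of $W$; in particular the edges of $Q$ that leave and re-enter $V(P)$ form a large linkage between $P$ and $G[W]$. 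The goal is then to feed a suitable subpath $P'$ of $P$, a suitable path $Q'$ carved out of the $W$-part of $Q$, and this linkage into Lemma~\ref{lemma1}: after replacing the linkage by a large subfamily that partitions into internally crossing, mutually parallel pieces each of size $1$ or even, Lemma~\ref{lemma1} produces a path whose order exceeds $\ell$, contradicting the maximality of $P$. Balancing the gap lengths and the number of deleted gaps so that this argument goes through while keeping $|X|\le t$ is where the exponent $2/3$ and the constant $90$ are tuned.

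The main obstacle is precisely this splicing step. From an arbitrary longest path $Q$ avoiding $X$ one must extract base paths $P'$ and $Q'$ of equal order and, crucially, a matching between them with the crossing/parallel structure demanded by Lemma~\ref{lemma1} --- yet the edges along which $Q$ oscillates between $P$ and $W$ need not a priori be pairwise nested or parallel, so one has to pass to a large well-structured subfamily (for instance via an Erd\H{o}s--Szekeres-type argument applied to the order in which $Q$ visits these edges) and then check that what remains, together with $P'$, is still long enough to beat $\ell$. The remaining technical work is bookkeeping the sub-cases --- longest paths that meet $W$ rarely versus often, and longest paths whose endpoints lie inside $V(P)$ versus in $W$ --- and verifying that the slack of about $n^{2/3}/90$ is enough in each of them.
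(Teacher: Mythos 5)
Your proposal handles the extreme ranges of $\ell$ correctly (those coincide with the paper's observation that $V(P)$ and any $n-\ell+1$ vertices are transversals), but the plan for the intermediate range $t<\ell\le n-t$ has a gap that I do not see how to close. You propose to take the transversal $X$ as a spread-out subset of $V(P)$ and to argue that a longest path $Q$ avoiding $X$ "must traverse many of these gaps and use many vertices of $W$", yielding a large linkage between $P$ and $G[W]$. But for $\ell\le n/2$ --- which covers most of the intermediate range, since $\ell$ can be barely above $t\approx n/4$ --- the path $Q$ only needs $\ell-|W|=2\ell-n\le 0$ vertices of $P$; two longest paths of a connected graph are merely guaranteed to share \emph{one} vertex, so $Q$ may meet $V(P)$ in a single vertex sitting inside one gap and otherwise live entirely in $W$. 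Then there is no linkage at all and Lemma~\ref{lemma1} has nothing to act on. This is precisely the sub-case you defer to "bookkeeping", but it is the case where the approach of choosing $X\subseteq V(P)$ breaks down. A second, independent problem is that even when $Q$ does oscillate, the transition edges of $Q$ do not form a matching between two vertex-disjoint paths as Lemma~\ref{lemma1} requires: the "$W$-part of $Q$" is a union of many $W$-segments that are joined to one another only \emph{through} $P$, so you cannot take it as the second base path without destroying the very edges you want to use as $M$; and Lemma~\ref{lemma1} only returns a path between the four endpoints of the two base paths, so beating $\ell$ additionally requires connectors to reattach the unused portions, which your setup does not supply.

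For comparison, the paper does something structurally different. It assumes $\lpt(G)>\tau$ with $\tau=\lceil(1/4-\epsilon)n\rceil$, deletes a \emph{contiguous} block $T$ of $\tau$ consecutive vertices from the middle of $P$, and shows that a longest path $P'$ in $G-T$ must meet $P$ both before and after the block (otherwise a longer path is spliced directly). This yields four pairwise disjoint paths $A,B,C,D$ of order $\tau$ each, together with two small connector sets $X,Y$ of total size at most $4\epsilon n$ joining their ends. Edges of $G$ between two of these paths (not edges of some longest path $Q$) are the matching fed into Lemma~\ref{lemma1}: a Dilworth/perfection argument extracts a crossing-or-parallel substructure, whence any large matching between a pair would create a path longer than allowed by the a priori bounds on $\ell$. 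K\H{o}nig's theorem then gives each pair a small vertex cover, and the final transversal is $X\cup Y$ together with these six vertex covers --- a set mostly \emph{not} contained in $V(P)$ --- whose size is less than $\tau$, contradicting the assumption. If you want to pursue your route, the essential missing ideas are (i) making the deleted block contiguous so that the surviving longest path is forced to cross it, and (ii) building the transversal out of separators between long disjoint paths rather than out of vertices of $P$ itself.
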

\begin{proof}
Let $G$ be a connected graph of order $n$.
Let
$\epsilon=\frac{1}{90}n^{-\frac{1}{3}}$
and $\tau=\left\lceil\left(\frac{1}{4}-\epsilon\right)n\right\rceil$.
For a contradiction, we assume that $\lpt(G)>\tau$.
Let $P:u_1\ldots u_\ell$ be a longest path of $G$.
Since $V(P)$ as well as every set of $n-\ell+1$ vertices of $G$ are longest
path transversals, we obtain
\begin{equation}\label{e1p}
\left(\frac{1}{4}-\epsilon\right)n\leq \tau<\ell <n-\tau+1\leq \left(\frac{3}{4}+\epsilon\right)n+1.
\end{equation}
Let $p=\left\lceil\frac{\ell-\tau}{2}\right\rceil$.
Since the set
$T=\sst{u_i}{p+1\leq i\leq p+\tau}$
is too small to be a longest path transversal of $G$,
there is a path $P':v_1\ldots v_\ell$ in $G-T$.
Since $G$ is connected,
the paths $P$ and $P'$ intersect.

If $V(P)\cap V(P')\subseteq \{ u_1,\ldots,u_p\}$,
then let $v_x=u_r\in V(P)\cap V(P')$ be such that $r$ is maximum.
We may assume that $x\geq \frac{\ell+1}{2}$.
Now $v_1\ldots v_xu_{r+1}\ldots u_{\ell}$ is a path of order at least
$x+\ell-p\geq \frac{\ell+1}{2}+\ell-\frac{\ell-\tau+1}{2}=\ell+\frac{\tau}{2}>\ell$,
which is a contradiction.
Hence $P$ and $P'$ intersect in a vertex in $\{ u_1,\ldots,u_p\}$
as well as a vertex in $\{ u_{p+\tau+1},\ldots,u_\ell\}$.
Let $v_x=u_r$ be in $V(P')\cap \{ u_1,\ldots,u_p\}$ such that $r$ is maximum
and $v_y=u_s$ be in $V(P')\cap \{ u_{p+\tau+1},\ldots,u_\ell\}$ such that $s$ is minimum.
We may assume that $x<y$.

Since $v_1\ldots v_xu_{r+1}\ldots u_{s-1}v_y\ldots v_\ell$ is a path of order at least
$\ell-(y-x-1)+\tau$,
we obtain $y-x-1\geq \tau$.
Since $u_{s-1}\ldots u_{r+1}v_x\ldots v_\ell$ is a path of order at least
$\tau+\ell-(x-1)$,
we obtain $x-1\geq \tau$.
Since $u_{r+1}\ldots u_{s-1}v_y\ldots v_1$ is a path of order at least
$\tau+y$,
we obtain $\ell-y\geq \tau$.

Choosing four vertex-disjoint paths
$A:a_1\ldots a_\tau$,
$B:b_1\ldots b_\tau$,
$C:c_1\ldots c_\tau$, and
$D:d_1\ldots d_\tau$
as subpaths of the four paths
$P'[\{ v_1,\ldots,v_{x-1}\}]$,
$P[\{ u_{r+1},\ldots,u_{s-1}\}]$,
$P'[\{ v_{x+1},\ldots,v_{y-1}\}]$, and
$P'[\{ v_{y+1},\ldots,v_\ell\}]$,
respectively,
we obtain the existence of two vertex-disjoint sets $X$ and $Y$
in $V(G)\setminus (V(A)\cup V(B)\cup V(C)\cup V(D))$
with $\abs{X \cup Y}\leq n-4\tau\leq 4\epsilon n$ such that
$X$ contains a path between some neighbors of any two of the vertices $a_1$, $b_1$, and $c_1$,
and
$Y$ contains a path between some neighbors of any two of the vertices $b_{\tau}$, $c_{\tau}$, and $d_1$.
See Figure~\ref{fig0} for an illustration.

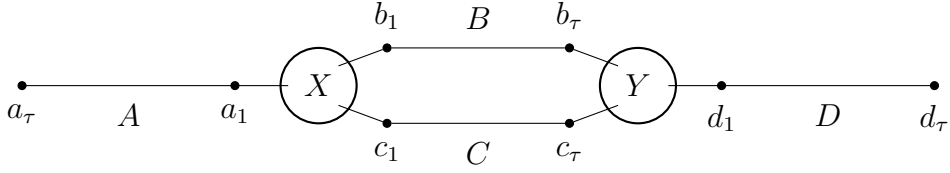
\begin{figure}[H]
\begin{center}
 \begin{tikzpicture}[vertex/.style={circle, draw=black, fill=black,
      inner sep=0mm, minimum size=3pt}]
\draw (-6,0) node[vertex] (L) {};
\node[below=3pt] at (L) {$a_{\tau}$};
\draw (-6,0)--(-3.2,0) node[vertex] (l) {} --(-2.5,0);
\node[below=3pt] at (l) {$a_1$};
\path (l)--(L) node[midway,below=3pt] {$A$};
\draw (-2.1,0) node[draw,circle,minimum size=10mm,thick] (x) {$X$};
\path (x)--++(45:3.7mm) coordinate (fl);
\draw (-1.2,.5) node[vertex] (cl) {};
\node[above=3pt] at (cl) {$b_1$};
\draw (fl)--(cl);
\path (x)--++(-45:3.7mm) coordinate (flb);
\draw (-1.2,-.5) node[vertex] (clb) {};
\node[below=3pt] at (clb) {$c_1$};
\draw (flb)--(clb);

\draw (6,0) node[vertex] (R) {};
\node[below=3pt] at (R) {$d_{\tau}$};
\draw (6,0)--(3.2,0) node[vertex] (r) {} --(2.5,0);
\node[below=3pt] at (r) {$d_1$};
\path (r)--(R) node[midway,below=3pt] {$D$};
\draw (2.1,0) node[draw,circle,minimum size=10mm,thick] (y) {$Y$};
\path (y)--++(135:3.7mm) coordinate (fr);
\draw (1.2,.5) node[vertex] (cr) {};
\node[above=3pt] at (cr) {$b_{\tau}$};
\path (cl)--(cr) node[midway,above=3pt] {$B$};
\draw (fr)--(cr);
\path (y)--++(-135:3.7mm) coordinate (frb);
\draw (1.2,-.5) node[vertex] (crb) {};
\node[below=3pt] at (crb) {$c_{\tau}$};
\draw (frb)--(crb);
\path (clb)--(crb) node[midway,below=3pt] {$C$};
\draw (cl)--(cr);
\draw (clb)--(crb);
\end{tikzpicture}
\end{center}
\caption{The paths $A$, $B$, $C$, and $D$ and the two sets $X$ and $Y$.}\label{fig0}
\end{figure}

\noindent If $a_ib_j$ is an edge of $G$ with $j\geq i$,
then
the path $a_\tau\ldots a_ib_j\ldots b_1$,
a path in $X$ between neighbors of $b_1$ and $c_1$,
the path $C$,
a path in $Y$ between neighbors of $c_\tau$ and $d_1$, and
the path $D$
form a path of order at least $3\tau+(j-i+1)+2$.
By~\eqref{e1p}, this implies that
\begin{equation}\label{e2p}
j-i\leq \lceil 4\epsilon n\rceil-3.
\end{equation}
Our goal is now to prove the existence of a vertex cover $T_{A,B}$ of small order
for the bipartite graph $G_{A,B}$ with bipartition $V(A)$ and $V(B)$
formed by the edges between these two sets.
First, note that if $4\epsilon n\le2$, then~\eqref{e2p} implies that
this bipartite graph is edgeless,
so it is enough to set $T_{A,B}=\emptyset$.

\noindent
Assume now that $4\epsilon n>2$.
Let $N$ be a maximum matching of $G_{A,B}$.
Let $I=\left[\left\lceil\frac{\tau}{2(\lceil 4\epsilon n\rceil-3)}\right\rceil\right]$.
For $i\in I$, let $N_i$ be the set of edges in $N$ that are incident with a vertex in
\[\sst{a_j}{2(\lceil 4\epsilon n\rceil-3)(i-1)+1\leq j\leq \min\{ \tau,2(\lceil 4\epsilon n\rceil-3)i\}}.\]
By~\eqref{e2p},
if $a_{i_1}b_{i_2}\in N_i$ and $a_{j_1}b_{j_2}\in N_j$ with $j-i\geq 2$,
then $(j_1-i_1)(j_2-i_2)>0$, that is, the two edges are parallel in the sense of Lemma~\ref{lemma1}.
Without loss of generality, we may assume that
$\bigcup_{\text{$i\in I:i$ odd}}N_i$ contains at least half the edges of $N$.
Since $N$ is a matching, $\abs{N_i}\leq 2(\lceil 4\epsilon n\rceil-3)$ for every $i\in I$.
Since permutation graphs are perfect~\cite[Chapter 7]{go},
each $N_i$ contains a set of at least $\sqrt{\abs{N_i}}\geq \frac{\abs{N_i}}{\sqrt{2(\lceil 4\epsilon n\rceil-3)}}$ edges
that are either all pairwise parallel
or all pairwise crossing in the sense of Lemma~\ref{lemma1}.
This implies that $N$ contains a subset $M_0$
that satisfies condition (b) from Lemma~\ref{lemma1} with
$\abs{M_0}\geq
\sum_{\text{$i\in I:i$ odd}}\frac{\abs{N_i}}{\sqrt{2(\lceil 4\epsilon n\rceil-3)}}\geq
\frac{1}{2\sqrt{2(\lceil 4\epsilon n\rceil-3)}}\abs{N}$.
By removing a set of at most $\abs{M_0}/3$ edges from $M_0$,
we obtain a matching $M$ that satisfies both conditions from Lemma~\ref{lemma1} with
$\abs{M}\geq \frac{2}{3}\abs{M_0}\geq \frac{1}{3\sqrt{2(\lceil 4\epsilon n\rceil-3)}}\abs{N}$.

If $N$ has order at least $(4\epsilon n-1)3\sqrt{2(\lceil 4\epsilon n\rceil-3)}$,
then $N$ contains a matching $M$ as in Lemma~\ref{lemma1} of order at least $4\epsilon n-1$.
Thus by Lemma~\ref{lemma1}, the graph
$(A\cup B)+M$ contains a path $Q$ between $\{a_1,b_1\}$ and $\{a_\tau,b_\tau\}$
of order at least $\tau+\abs{M}$.
Now
the path $Q$,
a path in $X$ between neighbors of a vertex in $\{ a_1,b_1\}$ and $c_1$,
the path $C$,
a path in $Y$ between neighbors of $c_\tau$ and $d_1$, and
the path $D$
form a path of order at least $3\tau+\abs{M}+2\geq \left(\frac{3}{4}+\epsilon\right)n+1$,
which contradicts~\eqref{e1p}.
Hence the bipartite graph $G_{A,B}$
has no matching of order at least $(4\epsilon n-1)3\sqrt{2(\lceil 4\epsilon n\rceil-3)}$.
Now K\"{o}nig's theorem~\cite{ko} implies that
$G_{A,B}$ has a vertex cover $T_{A,B}$
of order less than $(4\epsilon n-1)3\sqrt{2(\lceil 4\epsilon n\rceil-3)}$.

Similar arguments yield that for every two distinct paths $Q,R\in \{ A,B,C,D\}$,
the bipartite graph $G_{Q,R}$ with bipartition $V(Q)$ and $V(R)$
formed by the edges between these two sets
has a vertex cover $T_{Q,R}$
of order less than $(4\epsilon n-1)3\sqrt{2(\lceil 4\epsilon n\rceil-3)}$ if
$4\epsilon n>2$ and of order $0$ otherwise.
(If, for instance, $a_id_j$ is an edge of $G$ with $j\geq i$,
then
the path $a_\tau\ldots a_id_j\ldots d_1$,
a path in $Y$ between neighbors of $d_1$ and $b_\tau$,
the path $B$,
a path in $X$ between neighbors of $b_1$ and $c_1$, and
the path $C$
again form a path of order at least $3\tau+(j-i+1)+2$
and we can argue as above.)

Let $T'=X\cup Y\cup\bigcup_{{\left\{Q,R \right\}\in\binom{\left\{A,B,C,D\right\}}{2}}}T_{Q,R}$.
Since every component of $G-T'$ has order at most $\tau$,
the set $T'$ is a longest path transversal of $G$ of order less than
$4\epsilon n+6(4\epsilon n-1)3\sqrt{2(\lceil 4\epsilon n\rceil-3)}$ if
$4\epsilon n>2$ and of order at most $4\epsilon n$ otherwise.
For
$\epsilon=\frac{1}{90}n^{-\frac{1}{3}}$, it follows that
$\abs{T'}\le\left(\frac{1}{4}-\epsilon\right)n$,
which yields the final contradiction.
\end{proof}

\noindent For the fractional version of the longest path transversal problem, a much stronger result is possible.
In fact, for every connected graph $G$,
there is a function $t\colon V(G)\to [0,1]$ such that
\begin{align*}
\sum_{u\in V(G)}t(u)&\leq \sqrt{n}\quad\text{and}\\
\sum_{u\in V(P)}t(u)&\geq 1\quad\text{for every $P\in \mathcal{P}(G)$.}\\
\end{align*}
Indeed, if the largest order of the paths in $\mathcal{P}(G)$ is at most $\sqrt{n}$,
then let $t$ be the characteristic function of $V(P)$ for some $P\in \mathcal{P}(G)$,
otherwise let $t$ be the constant function of value $\frac{1}{\sqrt{n}}$.

Confirming a conjecture by Zamfirescu~\cite{za1},
Thomassen~\cite{th} proved that $\lct(G)\leq \left\lceil\frac{\abs{V(G)}}{3}\right\rceil$ for every graph $G$,
which is best possible for the class of connected graphs
in view of a disjoint union of cycles of length $3$ to which bridges are added.
For $2$-connected graphs, though, this bound can be improved as follows.
\begin{theorem}\label{theorem2}
If $G$ is a $2$-connected graph of order $n$,
then $\lct(G)\leq \left\lceil\frac{n}{3}-\frac{n^{2/3}}{36}\right\rceil$.
\end{theorem}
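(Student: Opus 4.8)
The plan is to mirror the proof of Theorem~\ref{theorem1}, with ``cycle'' in place of ``path'', with the hypothesis that $G$ is $2$-connected used to guarantee that every two longest cycles of $G$ share a vertex (in place of the corresponding fact for longest paths of a connected graph), and aiming for the bound $n/3$ instead of $n/4$. Concretely, set $\epsilon=\frac{1}{36}n^{-1/3}$ and $\tau=\left\lceil\left(\frac13-\epsilon\right)n\right\rceil$, suppose for a contradiction that $\lct(G)>\tau$, and fix a longest cycle $C$ of order $\ell$. Exactly as in~\eqref{e1p} --- using that $V(C)$ and every set of $n-\ell+1$ vertices of $G$ is a longest cycle transversal --- we obtain $\left(\frac13-\epsilon\right)n\le\tau<\ell<n-\tau+1$, and in particular $\ell-2\tau\le n-3\tau\le 3\epsilon n$.

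Since any $\tau$ consecutive vertices of $C$ form a set that is too small to be a longest cycle transversal of $G$, there is a longest cycle $C'$ of $G$ avoiding that arc, and $C$ and $C'$ share a vertex because $G$ is $2$-connected. From $C$, $C'$, and their intersection I want to extract three pairwise vertex-disjoint paths $A:a_1\ldots a_\tau$, $B:b_1\ldots b_\tau$, $D:d_1\ldots d_\tau$, each of order $\tau$, together with two vertex-disjoint sets $X$ and $Y$ with $X\cup Y=V(G)\setminus(V(A)\cup V(B)\cup V(D))$, so that $\abs{X\cup Y}=n-3\tau\le 3\epsilon n$, $X$ contains a path between neighbours of any two of $a_1,b_1,d_1$, and $Y$ contains a path between neighbours of any two of $a_\tau,b_\tau,d_\tau$. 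This is the cyclic counterpart of the picture in Figure~\ref{fig0}, with three strands rather than four and with the two bundles $X$ and $Y$ sitting at the two ends of the strands.

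Granting this structure, the remainder is parallel to Theorem~\ref{theorem1}. Arguing exactly as for~\eqref{e2p}, but now closing the relevant paths into \emph{cycles} through the third strand and through $X$ and $Y$, an edge $q_ir_j$ between two of the paths $Q,R\in\{A,B,D\}$ with $j\ge i$ forces $j-i\le\lceil 3\epsilon n\rceil-3$. Consequently, using that permutation graphs are perfect and K\"{o}nig's theorem as before, if the bipartite graph $G_{Q,R}$ of edges between $V(Q)$ and $V(R)$ has a matching of order at least $(3\epsilon n-1)\cdot 3\sqrt{2(\lceil 3\epsilon n\rceil-3)}$, then it contains a matching $M$ as in Lemma~\ref{lemma1} of order at least $3\epsilon n-1$, whence $(Q\cup R)+M$ contains a path of order at least $\tau+\abs{M}$ joining its two ``ends'', which closes up through the third strand and $X,Y$ into a cycle of order at least $2\tau+\abs{M}+2\ge\left(\frac23+\epsilon\right)n+1>\ell$, a contradiction. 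Hence each $G_{Q,R}$ has a vertex cover $T_{Q,R}$ of order less than $(3\epsilon n-1)\cdot 3\sqrt{2(\lceil 3\epsilon n\rceil-3)}$ (and $T_{Q,R}=\emptyset$ when $3\epsilon n\le 2$), and $T'=X\cup Y\cup\bigcup_{\{Q,R\}\in\binom{\{A,B,D\}}{2}}T_{Q,R}$ is a longest cycle transversal, since $G-T'$ is a disjoint union of at most three graphs each on at most $\tau<\ell$ vertices. As $\binom{3}{2}=3$, the order of $T'$ is less than $3\epsilon n+3(3\epsilon n-1)\cdot 3\sqrt{2(\lceil 3\epsilon n\rceil-3)}$, which for $\epsilon=\frac1{36}n^{-1/3}$ is at most $\left(\frac13-\epsilon\right)n\le\tau$ --- the final contradiction.

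The crux is the extraction of the three-strand-plus-bundles structure. In Theorem~\ref{theorem1} the two endpoints of the longest path $P$ were what forced the second path $P'$ to meet both of the two ``halves'' of $P$ left after deleting the central block $T$; a longest \emph{cycle} has no endpoints and deleting an arc of $C$ leaves a single ``half'', so a new ingredient is needed. The replacement I would use is a maximality argument: if the second longest cycle $C'$ met $C$ only near one end of the arc $C-T$, or in too few vertices, then one could reroute $C$ along a detour through $C'$ --- bridging the two cycles by a short path provided by the $2$-connectivity of $G$ when their intersection is very small --- to produce a cycle on more than $\ell$ vertices, contradicting the choice of $C$. Making this rerouting precise (including a lifting argument when $C'$ crosses $C-T$ several times, and the degenerate cases $\abs{V(C)\cap V(C')}\in\{1,2\}$, where the two longer arcs of $C$ and of $C'$ between two common vertices already form a cycle of order at least $\ell$, and usually $\ell+1$) is where the work lies; once it is in place, $A$, $B$, $D$ arise as subpaths of $C$ and of $C'$, and $X$, $Y$ as the two pockets of leftover vertices around the two meeting regions, just as $A$, $B$, $C$, $D$, $X$, $Y$ did in the proof of Theorem~\ref{theorem1}.
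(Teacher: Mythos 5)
Your overall architecture is the paper's: three pairwise disjoint strands of order $\tau$ plus two small bundles $X,Y$ covering the leftover $n-3\tau\le 3\epsilon n$ vertices; edges between strands forced to be ``short'' ($j-i\le\lceil 3\epsilon n\rceil-3$) by closing up a long cycle through the third strand and the bundles; Lemma~\ref{lemma1}, perfection of permutation graphs and K\"{o}nig's theorem giving vertex covers $T_{Q,R}$ of order less than $(3\epsilon n-1)3\sqrt{2(\lceil 3\epsilon n\rceil-3)}$; and the final count $3\epsilon n+3(3\epsilon n-1)3\sqrt{2(\lceil 3\epsilon n\rceil-3)}\le\left(\frac13-\epsilon\right)n$. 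That entire portion matches the paper and is correct.

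The gap is precisely where you write that ``making this rerouting precise \ldots{} is where the work lies'': you never actually produce the three disjoint strands, and the route you sketch (case analysis on $\abs{V(C)\cap V(C')}$, a ``lifting argument'' when $C'$ crosses $C-T$ several times, bridging paths supplied by $2$-connectivity) is both uncarried-out and considerably more complicated than what is needed. The paper's resolution is short. Take $T=\{u_0,\ldots,u_{\tau-1}\}$ and let $C'$ be a longest cycle of $G-T$; since $G$ is $2$-connected, the two longest cycles $C$ and $C'$ share at least two vertices, all necessarily on $C-T$. Let $v_0=u_r$ and $v_k=u_s$ be the first and last common vertices encountered along $C-T$. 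The first strand is a subpath of $C[T]$ itself --- automatically disjoint from $C'$ because $C'$ avoids $T$ --- and the other two strands are subpaths of the two open arcs of $C'$ strictly between $v_0$ and $v_k$, which are disjoint from each other and from $T$ by construction, even if $C'$ meets $C-T$ many times (no lifting, and no separate treatment of small intersections, is required). Each of these two arcs of $C'$ has at least $\tau$ vertices because gluing the \emph{other} arc of $C'$ to the arc of $C$ running from $u_s$ to $u_r$ through $T$ yields a cycle of length at most $\ell$: for instance $v_0\ldots v_ku_{s+1}\ldots u_{\ell-1}u_0\ldots u_{r-1}$ has length at least $k+1+\tau$, forcing $\ell-k-1\ge\tau$, and symmetrically $k-1\ge\tau$. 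This observation --- that the deleted arc $T$ can serve as the strand coming from $C$, with the two arcs of $C'$ providing the other two --- is the ``new ingredient'' you correctly sensed was missing; without it, or a worked-out substitute, your proof is incomplete at its central step.
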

\begin{proof}
Let $G$ be a $2$-connected graph of order $n$.
Let
$\epsilon=\frac{1}{36}n^{-\frac{1}{3}}$
and $\tau=\left\lceil\left(\frac{1}{3}-\epsilon\right)n\right\rceil$.
For a contradiction, we assume that $\lct(G)>\tau$.
Let $C:u_0\ldots u_{\ell-1} u_0$ be a longest cycle of $G$.
Since $V(C)$ as well as every set of $n-\ell+1$ vertices of $G$ are longest
cycle transversals,
we obtain
\begin{equation}\label{e1}
\left(\frac{1}{3}-\epsilon\right)n\leq \tau <\ell < n-\tau+1\leq \left(\frac{2}{3}+\epsilon\right)n+1.
\end{equation}
Since the set $T=\{ u_0,\ldots,u_{\tau-1}\}$ is too small to be a longest cycle transversal of $G$,
there is a cycle $C':v_0\ldots v_{\ell-1} v_0$ in $G-T$.
Since $G$ is $2$-connected,
the cycles $C$ and $C'$ intersect in at least two vertices.
We may assume that $v_0=u_r$ is the first and
$v_k=u_s$ is the last common vertex of $C$ and $C'$
following the path $C-T$ from $u_\tau$ to $u_{\ell-1}$, that is $r<s$.

Since $v_0\ldots v_ku_{s+1}\ldots u_{\ell-1}u_0\ldots u_{r-1}$ is a cycle of length at least $k+1+\tau$,
we obtain $\ell-k-1\geq \tau$.
Since $v_{k+1}\ldots v_{\ell-1}u_r\ldots u_0u_{\ell-1}\ldots u_s$ is a cycle of length at least $\ell-(k-1)+\tau$,
we obtain $k-1\geq \tau$.

Choosing three vertex-disjoint paths
$P:x_1\ldots x_\tau$,
$Q:y_1\ldots y_\tau$, and
$R:z_1\ldots z_\tau$
as subpaths of the three paths
$C[T]$,
$C'[\{ v_1,\ldots,v_{k-1}\}]$, and
$C'[\{ v_{k+1},\ldots,v_{\ell-1}\}]$,
respectively,
we obtain the existence of two vertex-disjoint sets $X$ and $Y$ in $V(G)\setminus (V(P)\cup V(Q)\cup V(R))$
with $\abs{X\cup Y}\leq n-3\tau\leq 3\epsilon n$ such that
$X$ contains a path between some neighbors of every two of the vertices $x_1$, $y_1$, and $z_1$,
and
$Y$ contains a path between some neighbors of every two of the vertices $x_\tau$, $y_\tau$, and $z_\tau$.
See Figure~\ref{fig3} for an illustration.

\begin{figure}[H]
\begin{center}
 \begin{tikzpicture}[scale=.6,vertex/.style={circle, draw=black, fill=black,
      inner sep=0mm, minimum size=3pt}]
\draw (-4,0) node[vertex] (lm) {};
\node[below=3pt] at (lm) {$y_{1}$};
\draw (4,0) node[vertex] (rm) {};
\node[below=3pt] at (rm) {$y_{\tau}$};
\draw (lm)--(rm) node[midway,below=3pt] {$Q$};
\draw (-4,1.5) node[vertex] (lt) {};
\node[below=3pt] at (lt) {$x_{1}$};
\draw (4,1.5) node[vertex] (rt) {};
\node[below=3pt] at (rt) {$x_{\tau}$};
\draw (lt)--(rt) node[midway,below=3pt] {$P$};
\draw (-4,-1.5) node[vertex] (lb) {};
\node[below=3pt] at (lb) {$z_{1}$};
\draw (4,-1.5) node[vertex] (rb) {};
\node[below=3pt] at (rb) {$z_{\tau}$};
\draw (lb)--(rb) node[midway,below=3pt] {$R$};
\draw (-5.5,0) node[draw,circle,minimum size=10mm,thick] (x) {$X$};
\path (x)--++(45:5.5mm) coordinate (ft);
\draw (ft)--(lt);
\path (x)--++(0:5mm) coordinate (fm);
\draw (fm)--(lm);
\path (x)--++(-45:5.5mm) coordinate (fb);
\draw (fb)--(lb);
\draw (5.5,0) node[draw,circle,minimum size=10mm,thick] (y) {$Y$};
\path (y)--++(135:5.5mm) coordinate (Rt);
\draw (Rt)--(rt);
\path (y)--++(180:5mm) coordinate (Rm);
\draw (Rm)--(rm);
\path (y)--++(-135:5.5mm) coordinate (Rb);
\draw (Rb)--(rb);
\end{tikzpicture}
\end{center}
\caption{The paths $P$, $Q$, and $R$ and the two sets $X$ and $Y$.}\label{fig3}
\end{figure}
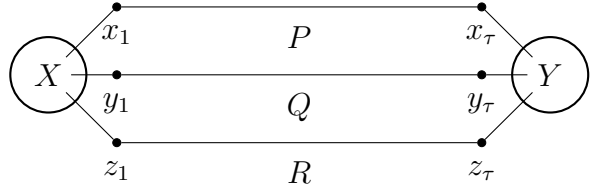

\noindent If $x_iy_j$ is an edge of $G$ with $j\geq i$, then
the path $y_1\ldots y_jx_i\ldots x_\tau$,
a path in $Y$ between neighbors of $x_\tau$ and $z_\tau$
the path $R$, and
a path in $X$ between neighbors of $y_1$ and $z_1$
form a cycle of length at least $2\tau+(j-i+1)+2$.
By~\eqref{e1}, this implies that
$j-i\leq \lceil 3\epsilon n\rceil-3$.
Using Lemma~\ref{lemma1} as in the proof of Theorem~\ref{theorem1},
we infer that for every two distinct $A,B\in\{ P,Q,R\}$,
the bipartite graph $G_{A,B}$ with bipartition $V(A)$ and $V(B)$
formed by the edges between these two sets
has a vertex cover $T_{A,B}$
of order less than
$(3\epsilon n-1)3\sqrt{2(\lceil 3\epsilon n\rceil-3)}$ if $3\epsilon n>2$
and of order
$0$ otherwise.
Since every component of $G-(X\cup Y\cup T_{P,Q}\cup T_{P,R}\cup T_{Q,R})$
has order at most $\tau$,
the set $T'=X\cup Y\cup T_{P,Q}\cup T_{P,R}\cup T_{Q,R}$
is a longest cycle transversal of $G$ of order less than
$3\epsilon n+3(3\epsilon n-1)3\sqrt{2(\lceil 3\epsilon n\rceil-3)}$ if $3\epsilon n>2$
and of order at most
$3\epsilon n$ otherwise.
For
$\epsilon=\frac{1}{36}n^{-\frac{1}{3}}$, it follows that
$\abs{T'}\le\left(\frac{1}{3}-\epsilon\right)n$,
which yields the final contradiction.
\end{proof}

\noindent Since every two longest paths of a connected graph $G$ intersect, it follows
that $\lpt(G)\leq \left\lceil\frac{\abs{\mathcal{P}(G)}}{2}\right\rceil$.
Similarly,
if every $k$ longest paths of a connected graph $G$ would intersect for some $k\geq 3$,
then it would follow that $\lpt(G)\leq \left\lceil\frac{\abs{\mathcal{P}(G)}}{k}\right\rceil$.
The next result shows how to decrease the
multiplicative constant $1/2$ in the former bound at the cost of adding a
square-root proportion of the total number of vertices in the graph.
\begin{proposition}\label{theorem3}
If $G$ is a connected graph and $\alpha\geq 2$, then
\[\lpt(G)\leq \frac{\abs{\mathcal{P}(G)}}{\alpha}+\sqrt{\alpha \abs{V(G)}}.\]
\end{proposition}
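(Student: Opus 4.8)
The plan is to separate two regimes according to the order $\ell$ of a longest path of $G$, and to play two elementary constructions of a transversal against each other. Write $n=\abs{V(G)}$ and $m=\abs{\mathcal{P}(G)}$. Recall the elementary fact, quoted in the introduction, that any two longest paths of a connected graph intersect; hence for a single longest path $P$ the vertex set $V(P)$ already meets every longest path, so that $\lpt(G)\le\ell$. If $\ell\le\sqrt{\alpha n}$ this alone gives $\lpt(G)\le\sqrt{\alpha n}\le\frac m\alpha+\sqrt{\alpha n}$, so from now on I would assume $\ell>\sqrt{\alpha n}$.

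In that case I would build the transversal greedily in two phases. Phase one: start with $X=\emptyset$ and, while some vertex outside $X$ lies on at least $\alpha$ longest paths avoiding $X$ (that is, paths $P$ with $V(P)\cap X=\emptyset$), add such a vertex to $X$. Each such step removes from the family of longest paths avoiding $X$ at least $\alpha$ members --- precisely those of them through the chosen vertex --- and a path leaves this family at most once, so the total number of removals is at most $m$ and therefore $\abs X\le m/\alpha$. When the process stops, every vertex outside $X$ lies on fewer than $\alpha$ of the longest paths avoiding $X$; let $\mathcal{P}'$ be the set of these surviving paths. Each path in $\mathcal{P}'$ has exactly $\ell$ vertices, all outside $X$, so counting incident pairs $(v,P)$ with $P\in\mathcal{P}'$ in two ways yields $\ell\abs{\mathcal{P}'}<\alpha(n-\abs X)\le\alpha n$, whence $\abs{\mathcal{P}'}<\alpha n/\ell<\sqrt{\alpha n}$ by the case hypothesis. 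Phase two: let $Y$ consist of one vertex from each path of $\mathcal{P}'$, so $\abs Y\le\abs{\mathcal{P}'}<\sqrt{\alpha n}$. Then $X$ meets every longest path not in $\mathcal{P}'$ and $Y$ meets every longest path in $\mathcal{P}'$, so $X\cup Y$ is a longest path transversal of order less than $\frac m\alpha+\sqrt{\alpha n}$, as desired.

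The proof is short once the right quantities are chosen, and that choice is the main thing to get right: the additive $\sqrt{\alpha n}$ term must come from bounding the \emph{number} of paths surviving the greedy phase, each of which is then hit by a single vertex, rather than from a single set $V(P)$, since $V(P)$ has $\ell$ vertices and is too expensive precisely when $\ell$ is large; dually, taking $V(P)$ is exactly what one wants when $\ell$ is small, and the threshold ``$\alpha$'' in the greedy step together with the case boundary $\ell=\sqrt{\alpha n}$ are arranged so that $\abs X\le m/\alpha$ and $\abs{\mathcal{P}'}<\sqrt{\alpha n}$ add up to the target. The only routine point needing care is the bookkeeping that each greedy step really decreases the number of not-yet-hit longest paths by at least $\alpha$ and that these decreases sum to at most $m$; the hypothesis $\alpha\ge2$ is used only to ensure the threshold exceeds $1$, so that surviving paths are consistent with every vertex lying on fewer than $\alpha$ of them.
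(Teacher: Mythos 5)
Your proof is correct and follows essentially the same strategy as the paper's: repeatedly extract vertices lying on at least $\alpha$ of the not-yet-hit longest paths (costing at most $\abs{\mathcal{P}(G)}/\alpha$ vertices), then combine a double count of vertex--path incidences with the case split at $\ell=\sqrt{\alpha\abs{V(G)}}$ to deal with what remains. The paper organizes this as an induction on the order of $G$ rather than an explicit greedy loop, but the ideas are identical; your iterative phrasing even sidesteps the paper's small detour about all longest paths of $G-v$ lying in a single component.
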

\begin{proof}
We proceed by induction on the order $n$ of $G$, the statement being true if
$n=1$.
Let $n\ge2$ and assume that the statement holds for all connected graphs of
order less than $n$. Let $G$ be a connected graph of order $n$ and let $\ell$
be the order of the longest paths in $G$.

We may assume that $\abs{\mathcal{P}(G)}>\sqrt{\alpha n}$ since otherwise
we obtain a longest path transversal of the desired size by picking one
vertex in each longest path of $G$. Next,
since the vertex set of a longest path in $G$ is a longest path
transversal, we may also assume that $\ell>\sqrt{\alpha n}$.

For a vertex $v\in V(G)$, let $p_v$ be the number of paths in $\mathcal{P}(G)$
that contain $v$. We may assume that $p_v<\alpha$ for every vertex $v\in
V(G)$. Indeed, suppose that $v$ is a vertex such that $p_v\ge\alpha$.
In particular, $p_v\ge1$.
If the set $\{v\}$ is a longest path
transversal of $G$, then $G$ satisfies the desired property.
Otherwise let $G'=G-v$ and note that $G'$ contains
a path of order $\ell$. Furthermore,
$\abs{\mathcal{P}(G')}=\abs{\mathcal{P}(G)}-p_v\le\abs{\mathcal{P}(G)}-\alpha$.
Note that all paths of order $\ell$ in $G'$ must belong to
the same component of $G'$, since
every two longest paths intersect. Let $C$ be this component; thus
$\mathcal{P}(C)=\mathcal{P}(G')$.
The induction hypothesis applied to $C$ yields that
$\lpt(G')\leq \frac{\abs{\mathcal{P}(G)}}{\alpha}-1+\sqrt{\alpha (n-1)}$.
As $\lpt(G)\le\lpt(G')+1$, we deduce that
$\lpt(G)\le\frac{\abs{\mathcal{P}(G)}}{\alpha}+\sqrt{\alpha n}$.

We now consider the number $N$ of pairs $(v,P)$ such that $P\in\mathcal{P}(G)$ and
$v\in V(P)$. One the one hand, since $N=\sum_{v\in V(G)}p_v$, we deduce from the previous
observations that $N<\alpha n$. On the other hand, since
$N=\sum_{P\in\mathcal{P}(G)}\abs{V(P)}=\ell\abs{\mathcal{P}(G)}$, the previous observations also imply that
$N>\alpha n$. This contradiction concludes the proof.
\end{proof}

\noindent The minimum sizes of transversals of longest paths can be bounded in classes of graphs
with small separators, such as planar graphs and graphs of bounded
tree-width. As before, no effort is made to minimize the constant
multiplicative factors appearing in the next two results.
\begin{proposition}\label{theorem4}
If $G$ is a connected planar graph of order at least $2$, then
\[\lpt(G)\le9\sqrt{\abs{V(G)}}\log \abs{V(G)}.\]
\end{proposition}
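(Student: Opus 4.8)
The plan is to combine the planar separator theorem with a divide-and-conquer recursion in the same spirit as Proposition~\ref{theorem3}, using a separator to break the graph into small pieces while keeping the transversal small. Write $n=\abs{V(G)}$ and set $t=\sqrt{n}$. First I would dispose of the easy case: if every longest path of $G$ has order at most $t$, then the vertex set of a single longest path is a transversal of size at most $t\le 9\sqrt{n}\log n$, and we are done. So we may assume the longest paths have order exceeding $\sqrt{n}$; in particular every longest path visits ``many'' vertices, which is what makes a separator useful.

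Next I would invoke the Lipton--Tarjan planar separator theorem: there is a set $S$ with $\abs{S}\le c\sqrt{n}$ (one may take $c$ around $2\sqrt 2$, comfortably below $9$ after bookkeeping) whose removal leaves components each of order at most $2n/3$. The key point is that $S$ is already a partial transversal: any longest path of $G$ that meets $S$ is covered, so it remains to deal with longest paths of $G$ that avoid $S$ entirely. Such a path lies inside a single component of $G-S$, so it is a path of that component; and since its order exceeds $\sqrt{n}$ while a component has order at most $2n/3$, the recursion will terminate. I would therefore, for each component $C$ of $G-S$, take a minimum longest-path transversal of $C$ (with respect to $C$'s own longest paths, which are at least as long as any $S$-avoiding longest path of $G$ restricted to $C$ — a small subtlety to check, see below), and let $T$ be the union of $S$ with these per-component transversals. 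Then $T$ meets every longest path of $G$, and $\abs{T}\le c\sqrt{n}+\sum_{C}\lpt(C)$.

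The recursion then unwinds in the standard way. Let $f(n)$ denote the worst-case bound we are proving, i.e.\ $f(n)=9\sqrt n\log n$. The components $C_1,\dots,C_m$ of $G-S$ satisfy $\sum_i\abs{V(C_i)}\le n$ and $\abs{V(C_i)}\le 2n/3$, so by induction $\sum_i\lpt(C_i)\le\sum_i f(\abs{V(C_i)})$; using concavity-type estimates (each $\sqrt{\abs{V(C_i)}}\le\sqrt{2n/3}$ and $\sum\sqrt{\abs{V(C_i)}}$ telescopes against the $\sqrt n$ term, while the $\log$ factor drops from $\log n$ to $\log(2n/3)$ on every piece) one gets $\sum_i f(\abs{V(C_i)})\le f(n)-c\sqrt n$, provided the constant $9$ is chosen large enough to absorb $c$ together with the loss in the logarithm; the inequality $\sqrt{2/3}\log(2n/3)+(\text{slack})\le\log n$-style computation is exactly where the concrete value $9$ is pinned down. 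This yields $\lpt(G)\le c\sqrt n+(f(n)-c\sqrt n)=f(n)$, completing the induction.

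The main obstacle I expect is not the recursion arithmetic but the bookkeeping around ``longest paths of a component versus longest paths of $G$''. A longest path of $G$ avoiding $S$ sits in one component $C$, but it need not be a \emph{longest} path of $C$ — $C$ might have even longer paths, and conversely a transversal of $\mathcal P(C)$ a priori need only hit the longest paths of $C$, not our shorter guest path. The clean fix is to note that any path of $G$ avoiding $S$ has the same order $\ell$ (the global maximum), and $C$ contains such a path, so $C$'s longest paths have order at least $\ell$; hence it suffices to take a transversal of $C$ hitting all paths of $C$ of order $\ge\ell$ — but a standard transversal of $C$'s actual longest paths might miss a length-$\ell$ path if $C$'s maximum exceeds $\ell$. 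The resolution is that we only ever need to recurse on components whose own longest-path order is $\ell$ as well (if a component has strictly longer paths, those would contradict $\ell$ being the global maximum, since that component is a subgraph of $G$) — so in fact $C$'s longest paths have order exactly $\ell$ whenever $C$ contains an $S$-avoiding global longest path, and for the other components there is nothing to cover. Pinning down this dichotomy carefully, and making sure the $\sqrt{n}\log n$ estimate survives the $2n/3$ branching factor with the stated constant, is the only delicate part; everything else is routine.
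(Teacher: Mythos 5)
Your recursion does not close, and the step where it breaks is precisely the one you wave through with ``concavity-type estimates.'' You recurse into \emph{every} component of $G-S$ and claim $\sum_i f(\abs{V(C_i)})\le f(n)-c\sqrt{n}$ for $f(n)=9\sqrt{n}\log n$ with $\sum_i\abs{V(C_i)}\le n$. This is false: the function $x\mapsto\sqrt{x}\log x$ is subadditive, so the sum over pieces can \emph{exceed} $f(n)$. Already two components of order $n/2$ give $2f(n/2)=9\sqrt{2}\,\sqrt{n}\,(\log n-\log 2)\approx\sqrt{2}\,f(n)>f(n)$, and with $m$ equal pieces the sum grows like $\sqrt{m}\,\sqrt{n}\log(n/m)$. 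There is no choice of the leading constant that absorbs this; a multi-branch recursion of the form $T(n)\le c\sqrt{n}+\sum_i T(n_i)$ with $n_i\le 2n/3$ only yields a bound linear in $n$, not $O(\sqrt{n}\log n)$.

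The missing idea is that you only ever need to recurse into \emph{one} component. Since any two longest paths of a connected graph intersect, two longest paths of $G$ lying in different components of $G-S$ would be disjoint, a contradiction; hence all paths of order $\ell$ that avoid $S$ lie in a single component $C$ of $G-S$. (Your closing paragraph gets close --- you note that only components containing a global longest path matter --- but you never invoke the pairwise-intersection property to conclude there is at most one such component, and your arithmetic in the middle paragraph still sums over all of them.) With the single-branch recursion $f(n)\le 2\sqrt{2}\sqrt{n}+f(2n/3)$ the computation is exactly the paper's:
\[
2\sqrt{2}\sqrt{n}+9\sqrt{2n/3}\log(2n/3)=9\sqrt{2n/3}\log n+\sqrt{n}\left(2\sqrt{2}-9\sqrt{2/3}\log(3/2)\right)\le 9\sqrt{n}\log n,
\]
because $9\sqrt{2/3}\log(3/2)>2\sqrt{2}$. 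Your treatment of the ``longest paths of $C$ versus longest paths of $G$'' subtlety is fine ($C$ contains a path of order $\ell$ and is a subgraph of $G$, so its longest paths have order exactly $\ell$), and your base case is harmless but not where the difficulty lies.
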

\begin{proof}
We proceed by induction on the order $n$ of $G$, the result being true if $n=2$.
Let $n\ge3$ and assume that the statement holds for all connected planar
graphs of order at least $2$ and
less than $n$. Let $G$ be a connected planar graph of order $n$ and let $\ell$ be the order of the
longest paths in $G$.
In particular, $\ell\ge2$.
Since $G$ is planar, the separator theorem of Lipton and Tarjan ensures that
$G$ contains a set $X$ of order at most $2\sqrt{2}\sqrt{n}$ such that every component
of $G-X$ has order at most $2n/3$.

If $X$ is a longest path transversal of $G$, then $G$ satisfies the desired
property. Otherwise, $G-X$ contains a path of order $\ell$. Note that, since
every two longest paths of $G$ intersect,
all paths of order $\ell$ in $G-X$ must be contained in the same component of $G-X$, which we call $C$.
Moreover, the order of $C$ is at most $2n/3$ and at least $\ell$, so the induction hypothesis
implies that $C$ has a longest path transversal $X'$ of order at most
$9\sqrt{2n/3}\log(2n/3)$. Therefore, $X\cup X'$ is a longest path transversal
of $G$ of order at most
\begin{align*}
   2\sqrt{2}\sqrt{n}+9\sqrt{2n/3}\log(2n/3)&=9\sqrt{2n/3}\log
   n+\sqrt{n}\cdot\left(2\sqrt{2}-9\sqrt{2/3}\log(3/2)\right)\\
   &\le9\sqrt{n}\log n
\end{align*}
since $9\sqrt{2/3}\log(3/2)>2\sqrt{2}$. This concludes the proof.
\end{proof}

\noindent An anlaguous statement is true for graphs of bounded tree-width. Indeed,
if $G$ is a graph with tree-width at most $k$, then there is a set $X$ of
vertices of $G$ of order at most $k+1$ such that every component of $G-X$
has order at most $\abs{V(G)}/2$. Consequently, an inductive reasoning similar
to that made in the proof of Proposition~\ref{theorem4} yields the following
statement.
\begin{proposition}\label{theorem5}
If $G$ is a connected graph of tree-width at most $k$ and order at least $2$, then
\[\lpt(G)\le3k\log \abs{V(G)}.\]
\end{proposition}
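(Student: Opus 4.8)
The plan is to mimic the inductive argument of Proposition~\ref{theorem4} almost verbatim, replacing the Lipton--Tarjan planar separator by the tree-width separator. First I would set up the induction on the order $n$ of $G$, with the trivial base case $n=2$ where a single vertex is a longest path transversal, and note that $3k\log 2 \ge 1$ so the bound holds (using natural logarithm throughout; with $\log$ base $2$ it is even easier). For the inductive step, let $G$ be a connected graph of tree-width at most $k$ and order $n\ge 3$, let $\ell$ be the common order of the longest paths, and invoke the standard fact that a graph of tree-width at most $k$ has a set $X$ with $\abs{X}\le k+1$ whose removal leaves components each of order at most $n/2$. (This is a textbook consequence of the existence of a balanced separator in the tree decomposition; it is quoted in the paragraph preceding the statement, so I may assume it.)

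Next I would case on whether $X$ is itself a longest path transversal. If it is, we are done since $k+1\le 3k\log n$ for $k\ge 1$ and $n\ge 3$. Otherwise $G-X$ contains a path of order $\ell$, and — exactly as in Proposition~\ref{theorem4}, using that every two longest paths of a connected graph intersect — all such paths lie in a single component $C$ of $G-X$. That component has order $m$ with $\ell \le m \le n/2$; in particular $m\ge 2$, so the induction hypothesis applies to $C$ and gives a longest path transversal $X'$ of $C$ with $\abs{X'}\le 3k\log m \le 3k\log(n/2)$. Since every longest path of $G$ that avoids $X$ is a longest path of $C$, the set $X\cup X'$ is a longest path transversal of $G$.

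Finally I would bound $\abs{X\cup X'}\le (k+1) + 3k\log(n/2) = 3k\log n - 3k\log 2 + k + 1$ and check that $-3k\log 2 + k + 1 \le 0$, i.e. $k+1 \le 3k\log 2$. With the natural logarithm $3\log 2 = 2.079\ldots$, so for $k\ge 1$ this reads $k+1\le 2.079 k$, which holds for all $k\ge 1$ (the tightest case $k=1$ gives $2 \le 2.079$); if one prefers $\log_2$ the inequality $k+1\le 3k$ is even more comfortable. Hence $\abs{X\cup X'}\le 3k\log n$, completing the induction.

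The only real subtlety — and the step I would be most careful about — is the same one as in the planar case: justifying that all longest paths of $G-X$ reside in one component, which hinges on the pairwise intersection of longest paths in a connected graph together with the observation that, once $X$ fails to be a transversal, the surviving longest paths of $G$ have the full length $\ell$ and so are longest paths of whichever component contains them. Everything else is routine: the separator bound $k+1$ is a cited black box, the recursion depth is $O(\log n)$ because the order halves, and the constant $3$ is dictated precisely by making the base-case/merge inequality $k+1\le 3k\log 2$ go through, which is why no effort is spent optimizing it.
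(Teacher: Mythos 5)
Your proof is correct and is exactly the argument the paper intends: Proposition~\ref{theorem5} is only sketched in the text as ``an inductive reasoning similar to that made in the proof of Proposition~\ref{theorem4}'' using the $(k+1)$-vertex balanced separator, and you have carried out precisely that induction, including the single-component argument via pairwise intersection of longest paths and the closing inequality $k+1\le 3k\log 2$ for $k\ge 1$. Nothing further is needed.
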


\bigskip

\noindent We proceed to circular arc graphs.
We explain the approach of Balister \emph{et al.}~\cite{bagylesh},
the problem with their argument,
and prove the following weaker result.

\begin{theorem}\label{theorem6}
Let $G$ be a circular-arc graph.

If $G$ is connected, then $\lpt(G)\leq 3$,
and if $G$ is $2$-connected, then $\lct(G)\leq 3$.
\end{theorem}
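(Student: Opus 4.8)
\textbf{Proof proposal for Theorem~\ref{theorem6}.}

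The plan is to work directly with a circular-arc representation of $G$, say a family $\mathcal{A}=\{A_v:v\in V(G)\}$ of arcs on a circle with $uv\in E(G)$ iff $A_u\cap A_v\ne\emptyset$; without loss of generality no arc is the whole circle (otherwise the corresponding vertex is universal and a single vertex suffices), and no two arcs share an endpoint. The key dichotomy is whether some single point $q$ of the circle is covered by every longest path, in the sense that every $P\in\mathcal{P}(G)$ contains a vertex $v$ with $q\in A_v$. If such a $q$ exists, then deleting $q$ from the circle turns $G$ into an interval graph $G_q$ on the same vertex set (each arc becomes an interval once we cut at $q$), every longest path of $G$ survives as a walk meeting the vertex set $\{v:q\in A_v\}$, and we invoke the Balister \emph{et al.}\ result for connected interval graphs. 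More precisely, I would argue that the longest paths of $G$ that avoid all arcs through $q$ are exactly longest paths of $G_q$ living in one component, so by their theorem there is a single vertex $w$ hitting all of them; then $\{w\}$ together with \emph{one} vertex whose arc contains $q$ is a transversal of size $2$ — but one has to be careful that a single such vertex really does hit \emph{every} longest path through $q$, which need not hold since ``the arc contains $q$'' is not a clique condition globally. So instead I would take the set of arcs containing $q$, observe it is a clique, and use that a clique of a connected interval-like structure is hit by boundedly many vertices of any longest path; the honest bound coming out of this is $3$, not $2$, matching the statement.

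The harder case is when \emph{no} point of the circle is universally covered, i.e.\ for every point $q$ there is a longest path $P_q$ all of whose arcs miss $q$. Here the main idea is a covering/compactness argument on the circle: the circle is covered by finitely many open arcs $U_q=\bigcap_{v\in V(P_q)}(\text{complement of }A_v)$ (each $U_q$ open and nonempty), so finitely many of them, say $U_{q_1},\dots,U_{q_m}$, cover the circle. Choosing $m$ minimal, a standard Helly-type fact for arcs on a circle says $m\le 3$ can be forced only in degenerate situations; in general $m$ can be large, so the real work is to show that from these $m$ longest-path ``witnesses'' one can extract at most three vertices hitting all longest paths. The plan for this is: pick any three of the points $q_i$ whose open arcs still have the property that no single arc $A_v$ contains all three complementary regions' complement — equivalently, three points $q_1,q_2,q_3$ on the circle such that no arc of $\mathcal{A}$ avoids all three. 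Then I claim $V(P_{q_1})\cap V(P_{q_2})\cap V(P_{q_3})$ structure forces a common vertex, or else a contradiction with ``every two longest paths intersect''. The precise claim I would prove is: \emph{if $q_1,q_2,q_3$ are three points of the circle such that every arc of $\mathcal{A}$ contains at least one of them, then every longest path of $G$ contains a vertex whose arc contains $q_1$, or one whose arc contains $q_2$, or one whose arc contains $q_3$} — which is immediate from the covering property — \emph{and moreover for each $j$ the set $S_j=\{v:q_j\in A_v\}$ is a clique whose intersection with any longest path has bounded size}. Picking one vertex from each $S_j$ that actually lies on a longest path will not suffice directly, so the transversal is taken to be three carefully chosen vertices, one associated to each $q_j$, argued to hit every longest path via the interval-graph result applied to $G_{q_j}$ exactly as in the first case.

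Concretely the steps in order: (1)~set up the arc representation and dispose of universal vertices and of whole-circle arcs; (2)~prove the auxiliary lemma that for an interval graph (Balister \emph{et al.}) a single vertex hits all longest paths, and upgrade it to: in $G$, if all longest paths meet the clique $S_q=\{v:q\in A_v\}$, then $\mathrm{lpt}\le ?$ — here show one can cut the circle at $q$, apply the interval result to the resulting $G_q$, and patch; (3)~the covering argument producing points $q_1,q_2,q_3$ such that every arc contains one of them; (4)~combine: show $\{w_1,w_2,w_3\}$, where $w_j$ is the vertex provided by step~(2) applied at $q_j$, is a longest path transversal, using that any longest path misses two of the $q_j$'s only if\dots\ contradiction. (5)~For the $2$-connected case, repeat verbatim with cycles in place of paths, using that every two longest cycles of a $2$-connected graph intersect and the (presumably analogous) interval-graph statement for cycles; if the interval-graph statement for longest cycles is not available off the shelf, prove it by the same cut-at-a-point reduction to paths. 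I expect step~(4) — turning the three local witnesses into a genuine global transversal of size exactly $3$, and ruling out that a longest path could sneak past all three cliques $S_{q_j}$ — to be the main obstacle, since it is precisely the point where the flawed argument of Balister \emph{et al.}\ seems to break, and it is why we settle for $3$ rather than $1$.
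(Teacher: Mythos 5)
Your proposal does not reach a proof: the two steps that carry all the weight are precisely the ones left open. Step~(3) asks for three points $q_1,q_2,q_3$ of the circle such that every arc of $\mathcal{A}$ (or at least every longest path) meets one of them; no such triple need exist for an arbitrary arc family (take many short pairwise disjoint arcs), and for longest paths the existence of three such ``stabbing points'' is essentially equivalent to the theorem itself, not a lemma you can quote. You acknowledge that the compactness argument only yields $m$ open arcs $U_{q_1},\dots,U_{q_m}$ with $m$ possibly large, and then step~(4), which is supposed to compress these $m$ witnesses down to three vertices, ends with ``only if\dots\ contradiction''. That ellipsis is the theorem. Step~(2) is also flawed as stated: cutting the circle at $q$ does not turn $G$ into an interval graph on the same vertex set, because the arcs containing $q$ become two disjoint intervals each; you must delete the clique $S_q$ to get an interval graph, and then the longest paths of $G-S_q$ need not have any controlled relation to the longest paths of $G$, so the Balister \emph{et al.}\ interval result does not patch in the way you describe.

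For comparison, the paper's proof runs through a different and fully explicit mechanism. It fixes a \emph{minimum-size} cover $\mathcal{K}=\{K_0,\dots,K_{n-1}\}$ of the circle by maximal arcs of $\mathcal{F}$, uses the fact (Lemma~3.1 of Balister \emph{et al.}, whose proof is not in dispute) that for any longest chain $\mathcal{P}$ the trace $\mathcal{K}(\mathcal{P})$ is a contiguous interval of $\mathbf{Z}_n$, and then proves the key Assertion: if two longest chains have \emph{disjoint} traces, those traces must be \emph{adjacent} intervals of $\mathbf{Z}_n$ (this uses connectivity of $G$, the maximality of the arcs $K_a,K_b$, and the fact that two longest paths intersect). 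An extremal choice of three longest chains $\mathcal{P},\mathcal{Q},\mathcal{R}$ (each minimizing its trace subject to avoiding designated arcs of the previous ones) then forces their traces to partition $\mathcal{K}$ into three consecutive intervals, and a hypothetical fourth longest chain avoiding $K_{a+1},K_{\ell+1},K_{p+1}$ is shown to be impossible, so these three arcs form the transversal. Nothing playing the role of the Assertion or of this four-stage extremal argument appears in your outline; without a concrete substitute for it, the proposal is a plan rather than a proof.
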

\noindent Let $G$ be a connected circular arc graph.
Let $C$ be a circle and let $\mathcal{F}$ be a collection of open arcs of $C$
such that $G$ is the intersection graph of $\mathcal{F}$.
In view of the result for interval graphs mentioned in the introduction,
we may assume that $C\subseteq \bigcup_{A\in \mathcal{F}}A$.
Furthermore, we may assume that all endpoints of arcs in $\mathcal{F}$ are distinct.

Balister \emph{et al.}~\cite{bagylesh} consider a collection
$ \mathcal{K}=\{ K_0,\ldots,K_{n-1}\}$ of arcs in $\mathcal{F}$ such that
\begin{enumerate}
\item[(0)] $C\subseteq \bigcup_{A\in \mathcal{K}}A$,
\item[(1)] $n$ is minimal, and
\item[(2)] each $K_i$ is maximal, that is, no arc in $\mathcal{F}$ properly contains an arc in $\mathcal{K}$.
\end{enumerate}
They may assume that $n\geq 2$,
because otherwise, $G$ has a universal vertex that belongs to every longest path or cycle.
We consider the indices of the arcs in $\mathcal{K}$ as elements of $\mathbf{Z}_n$, that is, modulo $n$.

A \emph{chain of order $\ell$ in $\mathcal{F}$} is a sequence $\mathcal{P}:A_1\ldots A_\ell$ of distinct arcs in $\mathcal{F}$
such that $A_i\cap A_{i+1}\not=\emptyset$ for $i\in [\ell-1]$.
The chain $\mathcal{P}$ is \emph{closed}, if $A_\ell\cap A_1\neq\emptyset$.
Thus chains and closed chains in $\mathcal{F}$
correspond to paths and cycles in $G$.
For a chain $\mathcal{P}:A_1\ldots A_\ell$ in $\mathcal{F}$,
let $\mathcal{K}(\mathcal{P})=\{ A_1,\ldots,A_\ell\}\cap \mathcal{K}$.

If $\mathcal{P}:A_1\ldots A_\ell$ is a chain in $\mathcal{F}$ of largest order, then
Balister \emph{et al.}~\cite[Lemma 3.1]{bagylesh} proved that
$\mathcal{K}(\mathcal{P})$ is of the form $\sst{K_i}{i\in I}$ for some
contiguous and non-empty subset $I$ of $\mathbf{Z}_n$.
Their argument
actually yields the same statement for closed chains,
that is,
if $\mathcal{C}$ is a closed chain in $\mathcal{F}$ of largest order, then
$\mathcal{K}(\mathcal{C})$ is of the form $\sst{K_i}{i\in J}$
for some contiguous and non-empty subset $J$ of $\mathbf{Z}_n$.

\bigskip

\noindent In the proof of their main result~\cite[Theorem 3.3]{bagylesh}
--- stating that $\lpt(G)=1$ ---
Balister \emph{et al.} choose a chain $\mathcal{P}$ in $\mathcal{F}$ of largest order
such that $\mathcal{K}(\mathcal{P})$ has minimum order.
They let $\mathcal{K}(\mathcal{P})$ be $\{ K_{a+1},\ldots,K_{b-1}\}$
and assert that $K_{b-1}$ belongs to $\mathcal{K}(\mathcal{Q})$
for every chain $\mathcal{Q}$ in $\mathcal{F}$ of largest order,
that is, the vertex of $G$ corresponding to the arc $K_{b-1}$ would belong to every longest path of $G$.

For a contradiction, they consider a chain $\mathcal{Q}$ in $\mathcal{F}$ of largest order
such that $K_{b-1}\not\in \mathcal{K}(\mathcal{Q})$.
They set $\mathcal{K}(\mathcal{Q})=\{ K_{\ell+1},\ldots,K_{m-1}\}$.
They deduce from the choice of $\mathcal{P}$ that
$K_{\ell+1}\in \mathcal{K}(\mathcal{Q})\setminus \mathcal{K}(\mathcal{P})$
since
$K_{b-1}\in \mathcal{K}(\mathcal{P})\setminus \mathcal{K}(\mathcal{Q})$.
Using their Lemma 3.2~\cite{bagylesh},
they reorder the arcs in the chains $\mathcal{P}$ and $\mathcal{Q}$ and obtain
chains $\mathcal{P}^*$ and $\mathcal{Q}^*$
containing the same arcs as $\mathcal{P}$ and $\mathcal{Q}$ in a possibly different order, respectively.
They split these chains at $K_{b-1}$ and $K_{\ell+1}$ writing them as
$\mathcal{P^*}:\mathcal{P}_1K_{b-1}\mathcal{P}_2$
and
$\mathcal{Q^*}:\mathcal{Q}_1K_{\ell+1}\mathcal{P}_2$, respectively.

Their core statement is that
$\mathcal{C}_1:\mathcal{P}_1K_{b-1}\mathcal{R}K_{\ell+1}\mathcal{Q}_1^r$
and
$\mathcal{C}_2:\mathcal{P}_2^rK_{b-1}\mathcal{R}K_{\ell+1}\mathcal{Q}_2$
are chains
that satisfy the inequality
$\abs{\mathcal{C}_1}+\abs{\mathcal{C}_2}\geq 2+\abs{\mathcal{P}}+\abs{\mathcal{Q}}$,
where
$\mathcal{R}$ is the possibly empty chain $K_{b}\ldots K_{\ell}$
and the exponent ``$r$'' means reversal.
In order to prove this statement,
they have to show that no arc appears twice in these sequences.
They give details only for $\mathcal{C}_1$.
Their argument that $\mathcal{C}_1$ is a chain
heavily relies on the properties of the reordered chains $\mathcal{P}^*$ and $\mathcal{Q}^*$
guaranteed by their Lemma 3.2.
In the proof of Lemma 3.2 these properties are established
by iteratively shifting within $\mathcal{P}$
the arc $K_{b-1}$ to the beginning of $\mathcal{P}$
and, similarly,
by iteratively shifting within $\mathcal{Q}$
the arc $K_{\ell+1}$ to the beginning of $\mathcal{Q}$.
After proving that $\mathcal{C}_1$ is indeed a chain,
they say that the same type of argument shows that $\mathcal{C}_2$ is a chain as well.

This is the gap in their argument.

In order to use the same type of argument for $\mathcal{C}_2$,
they would need reversed versions of the properties guaranteed by Lemma 3.2,
that is, in order to establish these properties they would have to
iteratively shift within $\mathcal{P}$ the arc $K_{b-1}$ to the end of $\mathcal{P}$
and, similarly,
to iteratively shift within $\mathcal{Q}$ the arc $K_{\ell+1}$ to the end of $\mathcal{Q}$.
This may easily result in reorderings that are distinct from $\mathcal{P}^*$ and $\mathcal{Q}^*$.
In view of this asymmetry, the suitably adapted chain $\mathcal{C}_2$,
which would use the different reorderings of $\mathcal{P}$ and $\mathcal{Q}$,
need not satisfy the crucial inequality
$\abs{\mathcal{C}_1}+\abs{\mathcal{C}_2}\geq 2+\abs{\mathcal{P}}+\abs{\mathcal{Q}}$
and the argument breaks down.

We proceed to the proof of our Theorem~\ref{theorem6}.

\begin{proof}[Proof of Theorem~\ref{theorem6}]
Let $G$ be a connected circular arc graph.
We choose $C$, $\mathcal{F}$, and $\mathcal{K}$ exactly as above and we start
by proving the following statement.

\begin{assertion}
If $\mathcal{P}$ and $\mathcal{Q}$ are chains of largest order in $\mathcal{F}$ such that
\begin{align*}
\mathcal{K}(\mathcal{P})&=\{ K_{a+1},\ldots,K_{b-1}\}=\sst{K_i}{i\in I(\mathcal{P})}\quad\text{and}\\
\mathcal{K}(\mathcal{Q})&=\{ K_{\ell+1},\ldots,K_{m-1}\}=\sst{K_i}{i\in I(\mathcal{Q})}
\end{align*}
are disjoint,
then $a=m-1$ or $b=\ell+1$,
that is, the subsets $I(\mathcal{P})$ and $I(\mathcal{Q})$ of $\mathcal{Z}_n$
are contiguous.
\end{assertion}
To establish this assertion, assume on the contrary that
$a\not=m-1$ and $b\not=\ell+1$.
Select a set $S(\mathcal{P})$ of points of $C$ such that
$S(\mathcal{P})$ contains a point in the intersection of every two consecutive arcs of $\mathcal{P}$.
Define $S(\mathcal{Q})$ analogously.
If $K_a$ or $K_b$ would intersect $S(\mathcal{P})$ or $S(\mathcal{Q})$,
then $K_a$ or $K_b$ could be inserted into $\mathcal{P}$ or $\mathcal{Q}$,
respectively,
contradicting the assumption that these chains are of largest order.
If $S(\mathcal{P})$ or $S(\mathcal{Q})$ would intersect both arcs of $C\setminus (K_a\cup K_b)$,
then some arc of $\mathcal{P}$ or $\mathcal{Q}$ would properly contain $K_a$ or $K_b$,
which yields a contradiction to the condition (2) in the choice of $\mathcal{K}$.
Since $\mathcal{K}(\mathcal{P})$ and $\mathcal{K}(\mathcal{Q})$ are disjoint,
the sets $S(\mathcal{P})$ and $S(\mathcal{Q})$
are contained in different of the two arcs of $C\setminus (K_a\cup K_b)$.
Since $G$ is connected, $\mathcal{P}$ and $\mathcal{Q}$ have a common arc $A$.
This arc $A$ intersects $S(\mathcal{P})$ as well as $S(\mathcal{Q})$,
that is, it intersects both arcs of $C\setminus (K_a\cup K_b)$.
Hence either $K_a$ or $K_b$ is properly contained in $A$,
which again yields a contradiction to the condition (2) in the choice of $\mathcal{K}$.
This conludes the proof of the assertion.

\bigskip

\noindent Again let $\mathcal{P}$ be a chain in $\mathcal{F}$ of largest order
and, subject to this, such that $\mathcal{K}(\mathcal{P})$ has minimum order.
Let $\mathcal{K}(\mathcal{P})=\{ K_{a+1},\ldots,K_{b-1}\}$.

In view of the desired statement,
we may assume that $\mathcal{F}$ contains a chain $\mathcal{Q}$ of largest order
such that $K_{a+1},K_{b-1}\not\in \mathcal{K}(\mathcal{Q})$.
Among all such chains,
we assume that $\mathcal{Q}$ is chosen such that $\mathcal{K}(\mathcal{Q})$ has minimum order.
Let $\mathcal{K}(\mathcal{Q})=\{ K_{\ell+1},\ldots,K_{m-1}\}$.
By the choice of $\mathcal{P}$,
the sets
$\mathcal{K}(\mathcal{P})$
and
$\mathcal{K}(\mathcal{Q})$
are disjoint.
By the assertion,
we may assume that $b=\ell+1$.

In view of the desired statement,
we may assume that $\mathcal{F}$ contains a chain $\mathcal{R}$ of largest order
such that $K_{a+1},K_{b-1},K_{m-1}\not\in \mathcal{K}(\mathcal{R})$.
Among all such chains,
we assume that $\mathcal{R}$ is chosen such that $\mathcal{K}(\mathcal{R})$ has minimum order.
Let $\mathcal{K}(\mathcal{R})=\{ K_{p+1},\ldots,K_{q-1}\}$.
By the choice of $\mathcal{P}$ and $\mathcal{Q}$,
the sets
$\mathcal{K}(\mathcal{P})\cup \mathcal{K}(\mathcal{Q})$
and
$\mathcal{K}(\mathcal{R})$
are disjoint.
Applying the assertion
to $\mathcal{P}$ and $\mathcal{R}$
as well as
to $\mathcal{Q}$ and $\mathcal{R}$,
we obtain $p=m-1$ and $q=a+1$,
that is,
$\mathcal{K}(\mathcal{P})\cup \mathcal{K}(\mathcal{Q})\cup \mathcal{K}(\mathcal{R})$
is a partition of $\mathcal{K}$.

In view of the desired statement,
we may assume that $\mathcal{F}$ contains a chain $\mathcal{S}$ of largest order
such that $K_{a+1},K_{\ell+1},K_{p+1}\not\in \mathcal{K}(\mathcal{S})$.
We deduce from the choice of $\mathcal{P}$ that
$\mathcal{K}(\mathcal{S})$ has at least as many elements as $\mathcal{K}(\mathcal{P})$.
This implies that $\mathcal{K}(\mathcal{S})$ is disjoint from $\mathcal{K}(\mathcal{P})$.
Now, by the choice of $\mathcal{Q}$, this implies that
$\mathcal{K}(\mathcal{S})$ has at least as many elements as $\mathcal{K}(\mathcal{Q})$.
This in turn implies that
the set $\mathcal{K}(\mathcal{S})$ is disjoint from $\mathcal{K}(\mathcal{P})\cup \mathcal{K}(\mathcal{Q})$.
Finally, by the choice of $\mathcal{R}$, this implies that
$\mathcal{K}(\mathcal{S})$ has at least as many elements as $\mathcal{K}(\mathcal{R})$.
This in turn implies that
the set $\mathcal{K}(\mathcal{S})$ equals $\mathcal{K}(\mathcal{R})$,
that is, $\mathcal{K}(\mathcal{S})$ contains $K_{p+1}$,
which is a contradiction.

This completes the proof that $\lpt(G)$ is at most $3$.

\bigskip

\noindent From now on we assume that $G$ is a $2$-connected circular arc graph,
that is, every two longest cycles in $G$ --- closed chains of largest order in $\mathcal{F}$ --- intersect.
It is straightforward to see that the assertion also applies to closed chains instead of chains.
Arguing exactly as above for closed chains in $\mathcal{F}$
instead of chains in $\mathcal{F}$ implies that $\lct(G)\leq 3$.
\end{proof}

\noindent \textbf{Acknowledgement}\\
This research was initiated during a visit of the first author to LORIA.
It was supported by the French \emph{Agence nationale de la recherche} under reference
\textsc{anr 10 jcjc 0204 01}.

\end{document}